\theoremstyle{plain}
\newtheorem{theorem}{Theorem}[section]
\newtheorem{lemma}[theorem]{Lemma}
\theoremstyle{definition}
\newtheorem{remark}[theorem]{Remark}
\newtheorem{question}[theorem]{Question}
\theoremstyle{remark}
\numberwithin{equation}{section}
\newcommand{\R}{\mathbb R}
\newcommand{\C}{\mathbb C}
\newcommand{\GL}{\operatorname{GL}}
\newcommand{\SL}{\operatorname{SL}}
\newcommand{\Ot}{\operatorname{O}}
\newcommand{\SO}{\operatorname{SO}}
\newcommand{\SU}{\operatorname{SU}}
\newcommand{\Sp}{\operatorname{Sp}}
\newcommand{\spp}{\mathfrak{sp}}
\newcommand{\op}{\operatorname}
\newcommand{\spec}{\operatorname{Spec}}
\newcommand{\tr}{\operatorname{tr}}
\title[Spectral uniqueness]{Spectral uniqueness of bi-invariant metrics on symplectic groups}
\author{Emilio~A.~Lauret}
\address{Institut f\"ur Mathematik, Humboldt-Universit\"at zu Berlin, 10099 Berlin, Germany.}
\address{Permanent affiliation: CIEM--FaMAF (CONICET), Universidad Nacional de C\'ordoba, Medina Allende s/n, Ciudad Universitaria, 5000 C\'ordoba, Argentina.}
\email{elauret@famaf.unc.edu.ar}
\subjclass[2010]{Primary 58J53, Secondary 53C30, 53C35.}
\keywords{Isospectral, left-invariant metric, bi-invariant metric, compact Lie group}
\thanks{This research was supported by the Alexander von Humboldt Foundation, and grants from CONICET and FONCyT}
\date{June 9, 2018}
\begin{document}

\begin{abstract}
In this short note, we prove that a bi-invariant Riemannian metric on $\mathrm{Sp}(n)$ is uniquely determined by the spectrum of its Laplace-Beltrami operator within the class of left-invariant metrics on $\mathrm{Sp}(n)$. 
In other words, on any of these compact simple Lie groups, every  left-invariant metric which is not right-invariant cannot be isospectral to a bi-invariant metric. 
The proof is elementary and uses a very strong spectral obstruction proved by Gordon, Schueth and Sutton.
\end{abstract}

\maketitle

\section{Introduction}\label{sec:intro}

Given $(M,g)$ a compact Riemannian manifold, we call the \emph{spectrum} of $(M,g)$, denoted by $\spec(M,g)$, the spectrum of the Laplace--Beltrami operator $\Delta_g$. 
Two compact Riemannian manifolds are called \emph{isospectral} if their spectra coincide. 

It is well known that $\spec(M,g)$ does not determine the isometry class of $(M,g)$, because of a large number of non-isometric isospectral examples.
However, despite this, it is to be expected that Riemannian manifolds with very special geometric properties are spectrally distinguishable from other Riemannian manifolds.
For example, Tanno~\cite{Tanno73} showed that any round sphere of dimension $\leq 6$ cannot be isospectral to any non-isometric orientable Riemmanian manifold. 
For a recent account on spectrally distinguishable Riemannian manifolds, we refer the reader to \cite[\S1]{GordonSchuethSutton}.

Since symmetric spaces have very special geometric properties, one may expect to decide whether $(M,g)$ is a Riemannian symmetric space from $\spec(M,g)$. 
This apparently simple problem has no solutions except for Tanno's result on round spheres of dimension up to $6$. 
Consequently, it seems reasonable to restrict the space of metrics. 
In this article we wish to consider the following question.
\begin{question}\label{question}
Is a bi-invariant metric on a compact connected semisimple Lie group $G$ spectrally distinguishable within the space $\mathcal M^G$ of left-invariant metrics on $G$?
\end{question}
Irreducible Riemannian symmetric spaces are divided in two types. 
One of them is given by bi-invariant metrics on compact simple Lie groups. 
Furthermore, any metric in $\mathcal M^G$ is homogeneous. 
Although homogeneous Riemannian manifolds have nice geometric properties, there exist isospectral deformations of homogeneous metrics (\cite{Schueth01a}, \cite{Proctor05}) and pairs of isospectral homogeneous manifolds with interesting properties (\cite{Sutton02}, \cite{AnYuYu13}). 
We also note that, without the semisimple assumption on $G$, there are examples of isospectral flat metrics on any torus of dimension $\geq4$ (see for instance \cite[page \texttt{xxix}]{ConwaySloane-book}).

There are several advances of local type concerning Question~\ref{question}.
In \cite{Schueth01a}, among many other nice results, Schueth proved that a bi-invariant metric is infinitesimally spectrally rigid, that is, it cannot be continuously isospectrally deformed. 
When $G$ is simple, Gordon and Sutton~\cite{GordonSutton10} proved that any metric in the space of naturally reductive left-invariant metrics $\mathcal M_{\text{nat}}^G$ is spectrally isolated in $\mathcal M_{\text{nat}}^G$.
Furthermore, they also proved that any family of mutually isospectral compact symmetric spaces is finite.

In the author's opinion, the best results toward providing an answer to Question~\ref{question} were obtained in \cite{GordonSchuethSutton}. 
In this article, Gordon, Schueth and Sutton proved that a bi-invariant metric is spectrally isolated in $\mathcal M^G$. 
More precisely, there is a neighborhood in $\mathcal M^G$ around the fixed bi-invariant metric containing no metric isospectral to the fixed one besides itself. 
The topology considered in $\mathcal M^G$ is induced by the natural topology in the space of inner products on the Lie algebra of $G$. 
Actually, their result is much stronger since it shows that a finite part of the spectrum ignoring multiplicities suffices to prove the spectral isolation. 
When $G$ is simple, the first two non-zero eigenvalues are sufficient.   
Moreover, \cite[Prop.~3.1]{GordonSchuethSutton} provides a very strong restriction on the spectrum of the Laplace--Beltrami operator of any left-invariant metric on a simple Lie group (see Theorem~\ref{thm:GSS} below).

To the author's knowledge, Question~\ref{question} has been answered only in the cases $\SU(2)\simeq S^3$ by Tanno~\cite{Tanno73} and $\SO(3)$ by Schmidt and Sutton~\cite{SchmidtSutton13}\footnote{This article can be found on Schmidt's web page and dates from October 16th 2014. By a personal communication with the second named author of \cite{SchmidtSutton13}, in the near future, they will post a revised version with an extended result on the arXiv.}.
Actually, Schmidt and Sutton proved that any left-invariant metric on any of these two groups is uniquely determined by its spectrum in $\mathcal M^G$ (see \cite{Lauret-SpecSU(2)} for a recent alternative proof).
It was shown that the first four heat invariants cannot coincide for non-isometric pairs. 
They made use of the fact that, for $G=\SU(2)$ or $\SO(3)$, the space $\mathcal M^G/\sim$ (left-invariant metrics on $G$ up to isometry) has dimension $3$. 
This low dimension is not usual since $\dim (\mathcal M^G/\sim )\geq m^2-\binom{m}{2}-m$ for any $m$-dimensional compact simple Lie group $G$. 
For example, the next simplest case is $G=\SU(3)$, where $\dim (\mathcal M^G/\sim)=28$.

We now formulate our main result. 

\begin{theorem}\label{thm:main}
If a left-invariant metric on $\Sp(n)$ is isospectral to a bi-invariant metric on $\Sp(n)$, then they are isometric. 
\end{theorem}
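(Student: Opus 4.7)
The plan is to combine the Peter-Weyl decomposition of $\Delta_{g}$ with the GSS obstruction \cite[Prop.~3.1]{GordonSchuethSutton} and with a rigidity phenomenon peculiar to the standard representation of $\Sp(n)$. Fix the bi-invariant metric $g_0$ on $G=\Sp(n)$ induced by $Q_0=-B$ (Killing form, rescaled), and let $g$ be a left-invariant metric isospectral to $g_0$, coming from an inner product $Q(X,Y)=Q_0(AX,Y)$ for some $Q_0$-positive self-adjoint $A\colon\mathfrak{g}\to\mathfrak{g}$. The goal is to prove $A=c\,\Id$ for some $c>0$.

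By left-invariance, $\Delta_{g}$ commutes with the right-regular representation, hence preserves each isotypic summand $V_\lambda\otimes V_\lambda^{*}$ of $L^2(G)$ and acts there as $T_\lambda\otimes\Id$, where $T_\lambda=\pi_\lambda(\op{Cas}_Q)\in\op{End}(V_\lambda)$ is positive self-adjoint. In the bi-invariant model we have $T_\lambda^{0}=C_\lambda\Id_{V_\lambda}$, so $\spec(\Delta_{g_0})=\{C_\lambda\}_{\lambda\in\widehat{G}}$ with multiplicities $(\dim V_\lambda)^2$; isospectrality says the eigenvalues of the family $\{T_\lambda\}_{\lambda}$, each counted with multiplicity $\dim V_\lambda$, reshuffle into this same multiset.

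Next I would invoke the GSS obstruction in its sharpest form, focusing on the standard representation $V_{\omega_1}=\mathbb{C}^{2n}$ of $\Sp(n)$. This is the unique non-trivial irreducible of minimal Casimir eigenvalue, $C_{\omega_1}=2n+1$, contributing multiplicity $(2n)^2$ to the bottom of $\spec(g_0)$. Applied to the smallest eigenvalues, the obstruction should rule out any cross-matching between $T_{\omega_1}$ and $T_\lambda$ for $\lambda\neq\omega_1$, and so force $T_{\omega_1}=C_{\omega_1}\Id_{V_{\omega_1}}$. If a single low-dimensional isotypic component does not suffice, matching further low-lying eigenvalues of $\Delta_{g_0}$ should similarly yield $T_\lambda=C_\lambda\Id$ for $\lambda\in\{\omega_2,2\omega_1\}$.

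Finally, the equality $T_{\omega_1}=C_{\omega_1}\Id$ translates into the algebraic condition that a canonical $\Sp(n)$-equivariant map $\widetilde{B}\colon\op{Sym}^2\mathfrak{g}_{\mathbb C}\to\op{End}(V_{\omega_1})$ sends $A^{-1}$ to a scalar operator. The representation-theoretic input peculiar to $\Sp(n)$ is the decomposition $\op{End}(V_{\omega_1})\cong\mathbb{C}\oplus V_{2\omega_1}\oplus V_{\omega_2}$, in which the adjoint representation $V_{2\omega_1}=\mathfrak{g}_{\mathbb C}$ already appears (since $\op{Sym}^2 V_{\omega_1}\cong\mathfrak{g}_{\mathbb C}$). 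The plan is to use this structural feature to show that $\widetilde{B}$ is non-zero on every non-trivial isotypic component of $\op{Sym}^2\mathfrak{g}_{\mathbb C}$ that could occur in $A^{-1}$, so that the scalar condition forces $A^{-1}$ to be $\op{Ad}(G)$-invariant and hence (by simplicity of $\mathfrak{g}$) scalar. The hard part will be this last step: controlling the kernel of $\widetilde{B}$ across the irreducible summands of $\op{Sym}^2\mathfrak{g}_{\mathbb C}$. This is precisely where the special status of $\Sp(n)$---in whose standard representation the symmetric square already realizes the full adjoint---sets the argument apart from analogous attempts for $\SU(n)$ or $\SO(n)$, where the standard representation does not carry the adjoint in its symmetric or exterior square in the same way.
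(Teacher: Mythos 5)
Your endgame is the step that fails. From $T_{\omega_1}=C_{\omega_1}\Id$ alone one cannot conclude that $A$ is scalar. Writing $-C_A=\sum_{i,j}(A^{-1})_{ij}X_iX_j$, the map $\widetilde B\colon \op{Sym}^2\mathfrak g_\C\to\op{End}(V_{\omega_1})$, $S\mapsto\sum_{i,j}s_{ij}\pi_{\omega_1}(X_i)\pi_{\omega_1}(X_j)$, is $G$-equivariant, so by Schur's Lemma it annihilates every isotypic component of $\op{Sym}^2\mathfrak g_\C$ whose type does not occur in $\op{End}(V_{\omega_1})\cong\C\oplus V_{2\varepsilon_1}\oplus V_{\varepsilon_1+\varepsilon_2}$; since $\dim\op{Sym}^2\mathfrak g=\binom{m+1}{2}$ with $m=n(2n+1)$ vastly exceeds $4n^2$, such components exist in abundance (e.g.\ the copy of $V_{4\varepsilon_1}$ inside $\op{Sym}^2 V_{2\varepsilon_1}$). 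Hence your plan to show ``$\widetilde B$ is non-zero on every non-trivial isotypic component'' is not merely hard but impossible, and in fact perturbing $A^{-1}$ by a real symmetric element of such a kernel component yields genuinely non-bi-invariant left-invariant metrics for which $\pi_{\omega_1}(-C_A)$ is \emph{exactly} the scalar $2n+1$ (the dependence on $A^{-1}$ is linear). The fallback via $T_{2\omega_1}$ (the adjoint representation, whose $\op{End}$ does contain all of $\op{Sym}^2\mathfrak g_\C$) would need both a proof that the spectrum forces $T_{2\omega_1}$ scalar and a non-vanishing statement of the kind underlying Schueth's infinitesimal rigidity — neither is sketched. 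Moreover, your middle step is also a gap: Theorem~\ref{thm:GSS} only gives the trace identity $\tr\pi(-C_A)=\tfrac{\|A\|^2}{m}\tr\pi(-C_I)$ with the unknown factor $\|A\|^2/m$; it does not by itself ``rule out cross-matching'' at the bottom of the spectrum.

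The paper's route avoids all of this and is worth comparing. After normalizing volumes (so $A\in\SL(m,\R)$), the multiplicity $4n^2$ of the first positive eigenvalue $2n+1$ of $\Delta_I$ must be expressible as $\sum_h\dim V_{\pi_h}$ over pairs $(\pi_h,j_h)$ with $\lambda_{j_h}^{\pi_h,A}=2n+1$; a Weyl-dimension-formula estimate (Lemma~\ref{lem:dimensiones} and Tables~\ref{table:n=2}--\ref{table:n>=5}) restricts the possible $\pi_h$, the resulting Diophantine equation has only the solution ``$2n$ copies of $\pi_{\varepsilon_1}$'' for $n=3$ and $n\geq5$, and the one spurious solution for $n=2,4$ is excluded because the offending representation is quaternionic, so eigenvalues of $\pi(-C_A)$ there have even multiplicity. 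This yields merely the trace equality $\tr\pi_{\varepsilon_1}(-C_A)=\tr\pi_{\varepsilon_1}(-C_I)$ — and that already suffices: by \eqref{eq:GSS2} it gives $\|A\|^2=m$, and together with $\det A=1$ the equality case of the arithmetic–geometric mean inequality forces $A^tA=I$, i.e.\ $g_A=g_I$. So the two ingredients missing from your sketch are precisely the eigenvalue-multiplicity counting (with the quaternionic parity trick) and the volume-normalization/AM–GM finish; with those, no analysis of $\op{Sym}^2\mathfrak g_\C$ is needed at all.
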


The proof utilizes in an essential way the mentioned spectral obstruction \cite[Prop.~3.1]{GordonSchuethSutton} (see also Theorem~\ref{thm:GSS}) due to Gordon, Schueth and Sutton.
The main argument studies the multiplicity of the first non-zero eigenvalue (see Theorem~\ref{thm:stronger}).

\section{Spectra of left invariant metrics}\label{sec:preliminaries}

Let $G$ be a compact connected semisimple Lie group of dimension $m$. 
It is well known that left-invariant metrics on $G$ are in correspondence with inner products on the Lie algebra $\mathfrak g$ of $G$. 
We fix $\langle \cdot,\cdot\rangle_I$ any $\op{Ad}(G)$-invariant inner product on $\mathfrak g$, for instance, minus the Killing form.
Let $\{X_1,\dots,X_m\}$ be an orthonormal basis of $\mathfrak g$ with respect to $\langle \cdot,\cdot \rangle_I$. 
For $A=(a_{i,j})\in\GL(m,\R)$, we denote by $\langle \cdot,\cdot\rangle_A$ the inner product on $\mathfrak g$ satisfying that $\{Y_1,\dots,Y_m\}$ is an orthonormal basis, where $Y_j=\sum_{i=1}^{m}a_{i,j}X_i$ for any $j$. 
One can check that $\langle \cdot,\cdot \rangle_{A} = \langle \cdot,\cdot \rangle_{B}$ if and only if $A=BP$ for some $P\in\Ot(m)$. 
Consequently, the space of left-invariant metrics on $G$ is in correspondence with $\GL(m,\R)/\Ot(m)$. 
For $A\in\GL(m,\R)$, we denote by $(G,g_A)$ the Riemannian manifold $G$ endowed with the left-invariant metric $g_A$ corresponding to $\langle\cdot,\cdot\rangle_A$.

Let $(R,L^2(G))$ be the right-regular representation of $G$, that is, for each $g\in G$, $R_g:L^2(G)\to L^2(G)$ is unitary given by $f\mapsto (R_g\cdot f)(x)=f(xg)$ for $x\in G$ and $f\in L^2(G)$. 
The Peter-Weyl Theorem ensures the equivalence 
\begin{equation}\label{eq:PeterWeyl}
L^2(G)\simeq \bigoplus_{\pi\in \widehat G} V_\pi\otimes V_\pi^*
\end{equation}
as $G$-modules.
Here, $\widehat G$ denotes the unitary dual of $G$, the action of $G$ on $V_\pi\otimes V_\pi^*$ is given by $g\cdot (v\otimes\varphi )= (\pi(g)v)\otimes\varphi$, and the embedding $V_\pi\otimes V_\pi^*\hookrightarrow C^\infty(G)$ is given by $v\otimes \varphi\mapsto f_{v\otimes\varphi}(x) = \varphi(\pi(x)v)$ for  $x\in G$.

Let $\Delta_A$ be the Laplace--Beltrami operator of $(G,g_A)$. 
One has that (c.f.\ \cite[Lem.~1]{Urakawa79}) 
\begin{equation}\label{eq:Laplacian}
\Delta_A\cdot f_{v\otimes\varphi} = f_{(\pi(-C_A)v)\otimes\varphi},
\end{equation}
where $C_A=\sum_{j=1}^m Y_j^2\in U(\mathfrak g_\C)$ with $\{Y_1,\dots,Y_m\}$ an orthonormal basis of $\mathfrak g$ with respect to $\langle\cdot,\cdot\rangle_A$.
Consequently, if $\lambda_{1}^{\pi,A},\dots, \lambda_{d_\pi}^{\pi,A}$ denote the eigenvalues of the finite-dimensional linear operator $\pi(-C_A):V_\pi\to V_\pi $ with $d_\pi=\dim V_\pi$, then 
\begin{equation}\label{eq:spec_A}
\spec (\Delta_A) = \bigcup_{\pi\in\widehat G} \big\{\!\big\{
\underbrace{\lambda_1^{\pi,A},\dots, \lambda_1^{\pi,A}}_{d_\pi\text{-times}},\dots, \underbrace{\lambda_{d_\pi}^{\pi,A},\dots,\lambda_{d_\pi}^{\pi,A}}_{d_\pi\text{-times}} 
\big\}\!\big\}.
\end{equation}

When $A=I$, $C_I$ is the Casimir element up to a positive constant, which lies in the center of the universal enveloping algebra of $\mathfrak g_\C$, so $\pi(-C_I)$ commutes with the action of $G$.
Thus $\pi(-C_I)=\lambda^\pi \operatorname{Id}_{V_\pi}$ by Schur's Lemma.  
Hence, 
\begin{equation}\label{eq:spec_I}
\spec (\Delta_I) = \bigcup_{\pi\in\widehat G} \big\{\!\big\{
\underbrace{\lambda^{\pi},\dots, \lambda^{\pi}}_{d_\pi^2\text{-times}} 
\big\}\!\big\}.
\end{equation}

We fix a root system $\Sigma(\mathfrak g_\C,\mathfrak h)$, where $\mathfrak h$ is a Cartan subalgebra of $\mathfrak g_\C$. 
Let $\rho$ denote half the sum of positive roots. 
Assume that the inner product $\langle\cdot,\cdot\rangle_I$ on $\mathfrak g$ is a negative multiple of the Killing form. 
Let us denote again by $\langle\cdot,\cdot\rangle_I$ the induced inner products on $\mathfrak g_\C$ and on $\mathfrak h^*$. 
For any $\pi\in \widehat G$, we have that (see for instance \cite[Prop.~5.28]{Knapp-book-beyond})
\begin{equation}\label{eq:eigenvalue}
-\lambda^{\pi} = \langle \mu_\pi+\rho,\mu_\pi+\rho \rangle_I - \langle \rho,\rho \rangle_I  = \langle \mu_\pi+2\rho,\mu_\pi \rangle_I , 
\end{equation}
where $\mu_\pi$ denotes the highest weight of $\pi$. 
It is important to note that $\langle\cdot,\cdot\rangle_A$ is negative definite on $\op{span}_\R\{\alpha:\alpha\in \Sigma(\mathfrak g_\C,\mathfrak h)\}$. 

For $A\in \GL(m,\R)$, set $\|A\|=\tr(A^t A)^{1/2} =(\sum_{i,j}a_{i,j}^2)^{1/2}$.

\begin{theorem}[Gordon, Schueth, Sutton~\cite{GordonSchuethSutton}]  \label{thm:GSS}
Let $G$ be a compact simple Lie group, let $g_I$ be a bi-invariant metric on $G$ and let $g_A$ be a left-invariant metric defined as above with $A\in \GL(m,\R)$ and $\det(A)\geq1$ (i.e.\ $\op{vol}(G,g_A)\leq \op{vol}(G,g_I)$). 
Then
\begin{equation}\label{eq:GSS}
\tr (\Delta_A|_{W}) = \frac{\|A\|^2}{m} \tr(\Delta_I|_{W}),
\end{equation}
where $\frac{\|A\|^2}{m}\geq1$ with equality if and only if $A\in\Ot(m)$, 
for every finite dimensional subspace $W$ of $L^2(G)$ which is invariant under the right-regular representation of $G$ and on which $G$ acts non-trivially.
\end{theorem}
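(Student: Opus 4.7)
The plan is to reduce the trace identity to a statement about a single irreducible representation of $G$, and then exploit the uniqueness (up to scalar) of an $\op{Ad}(G)$-invariant symmetric bilinear form on the simple Lie algebra $\mathfrak g$.

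First, I would decompose $W$ using Peter--Weyl~\eqref{eq:PeterWeyl}. Under the right-regular representation, each isotypic summand $V_\pi\otimes V_\pi^*$ transforms by $\pi$ on the first factor, so every right-invariant subspace of $V_\pi\otimes V_\pi^*$ has the form $V_\pi\otimes U_\pi^*$ for some subspace $U_\pi^*\subset V_\pi^*$. The hypothesis that $G$ acts non-trivially on $W$ ensures that only non-trivial $\pi$ contribute. By~\eqref{eq:Laplacian}, $\Delta_A$ acts on $V_\pi\otimes U_\pi^*$ as $\pi(-C_A)\otimes\Id$, and hence $\tr(\Delta_A|_W)=\sum_\pi \dim(U_\pi^*)\,\tr(\pi(-C_A))$, with an analogous expression for $A=I$. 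It therefore suffices to prove that, for every non-trivial $\pi\in\widehat G$,
\[
\tr(\pi(C_A))=\frac{\|A\|^2}{m}\,\tr(\pi(C_I)).
\]

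Second, I would expand $C_A=\sum_{i,k}(AA^t)_{ik}X_iX_k$ using $Y_j=\sum_i a_{ij}X_i$, so that the problem reduces to understanding the bilinear form $B_\pi(X,Y):=\tr(\pi(X)\pi(Y))$ on $\mathfrak g$. This form is symmetric and $\op{Ad}(G)$-invariant, the latter because $\pi(\op{Ad}(g)X)=\pi(g)\pi(X)\pi(g)^{-1}$. Since $\mathfrak g$ is simple, every $\op{Ad}(G)$-invariant symmetric bilinear form on $\mathfrak g$ is a scalar multiple of the Killing form, and therefore of $\langle\cdot,\cdot\rangle_I$. Thus $B_\pi=c_\pi\langle\cdot,\cdot\rangle_I$ for some $c_\pi\in\R$, which is strictly negative whenever $\pi$ is non-trivial: the operators $\pi(X)$ are skew-Hermitian, so $B_\pi$ is negative semi-definite with degeneracy locus an ideal of $\mathfrak g$, which must vanish by simplicity combined with non-triviality of $\pi$.

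Finally, I would assemble the pieces: $\tr(\pi(X_i)\pi(X_k))=c_\pi\delta_{ik}$, so $\tr(\pi(C_A))=c_\pi\tr(AA^t)=c_\pi\|A\|^2$ while $\tr(\pi(C_I))=c_\pi m$; dividing eliminates $c_\pi$ and yields the ratio $\|A\|^2/m$, independently of $\pi$. The only genuinely non-routine step is the appeal to simplicity of $\mathfrak g$: it is essential that a \emph{single} scalar $c_\pi$ controls $B_\pi$ on all of $\mathfrak g$, which would fail for a semisimple but non-simple group since the simple factors could be rescaled independently. The hypothesis $\det(A)\geq 1$ appears to play no role in the identity itself and is presumably a volume normalization retained from the broader discussion in~\cite{GordonSchuethSutton}.
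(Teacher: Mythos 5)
Your proof is correct. Note that the paper itself does not prove this statement --- it is quoted verbatim from \cite{GordonSchuethSutton} (Prop.~3.1 there) and then only applied to $W=V_\pi\otimes V_\pi^*$ to get \eqref{eq:GSS2} --- so there is no in-paper argument to match; what you have supplied is a self-contained proof, and it is essentially the same mechanism as the original one: Gordon--Schueth--Sutton work directly with the quadratic form $X\mapsto -\tr\big(X^2|_W\big)$ on $\mathfrak g$ determined by the right-invariant subspace $W$, observe it is $\op{Ad}(G)$-invariant and hence proportional to $-\langle\cdot,\cdot\rangle_I$ by simplicity, whereas you first split $W$ along Peter--Weyl and apply the same Schur-type uniqueness to $B_\pi(X,Y)=\tr(\pi(X)\pi(Y))$ for each non-trivial $\pi$; both routes hinge on the one-dimensionality of the space of invariant symmetric forms, and yours has the small advantage of making \eqref{eq:GSS2} (the only consequence the paper uses) explicit along the way. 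Two minor points of precision: (a) the hypothesis that $G$ acts non-trivially on $W$ does not by itself force every isotypic constituent of $W$ to be non-trivial --- $W$ may contain constants --- but this is harmless since the trivial part contributes $0$ to both sides of \eqref{eq:GSS}, so your identity survives; (b) the statement ``every $\op{Ad}(G)$-invariant symmetric bilinear form on a simple $\mathfrak g$ is a multiple of the Killing form'' uses that $\mathfrak g$ is a \emph{compact} real form (so the commutant of the adjoint representation is $\R$); for a real simple Lie algebra carrying a complex structure the space of such forms is two-dimensional, so it is worth saying ``compact simple'' rather than just ``simple'', or passing to $\mathfrak g_\C$ where the uniqueness is the standard fact. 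Your closing observations are also accurate: all right-invariant finite-dimensional subspaces are automatically $\Delta_A$-invariant (both $\Delta_A$ and the right-regular action live on the first tensor factor), the submodule structure $V_\pi\otimes U_\pi^*$ follows from Schur's lemma, and $\det(A)\geq1$ is irrelevant to the equality itself --- it only enters when one combines \eqref{eq:GSS} with the arithmetic--geometric mean inequality (exactly as in the proof of Theorem~\ref{thm:main}) to turn the identity into the lower bound used in \cite{GordonSchuethSutton}.
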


We next apply this theorem to $W=V_{\pi}\otimes V_\pi^*$ for each $\pi\in\widehat G$ non-trivial. 
In the notation of \eqref{eq:spec_A} and \eqref{eq:spec_I},  $\tr (\Delta_A|_{V_{\pi}\otimes V_\pi^*} ) =d_\pi \tr(\pi(-C_A)) =d_\pi  \sum_{j=1}^{d_\pi} \lambda_{j}^{\pi,A}$ and $\tr(\Delta_I|_{V_\pi\otimes V_\pi^*})=d_\pi\tr(\pi(-C_I)) =d_\pi^2 \;\lambda^{\pi}$, thus 
\begin{equation}\label{eq:GSS2}
\tr (\pi(-C_A))  =  \frac{\|A\|^2}{m} \tr(\pi(-C_I)). 
\end{equation}
This identity will be the main tool in the proof of Theorem~\ref{thm:main}.

\section{Proof of the main theorem}\label{sec:proof}
We set $G=\Sp(n)$, thus $G$ is a simply connected compact simple Lie group for every $n\geq1$ of dimension $m:=n(2n+1)$.
We fix the Cartan subalgebra $\mathfrak h$ of $\mathfrak g_\C=\spp(n,\C)$ and the associated root system $\Sigma(\mathfrak g_\C,\mathfrak h)$ as in \cite[\S II.1]{Knapp-book-beyond}, which is a standard way. 
In particular, $\{\varepsilon_1,\dots,\varepsilon_n\}$ is a basis of $\mathfrak h^*$ and the positive roots are $\Sigma^+:=\{\varepsilon_i\pm\varepsilon_j:1\leq i<j\leq n\}\cup\{2\varepsilon_j:1\leq j\leq n\}$.
Furthermore, the corresponding fundamental weights are $\{\omega_1,\dots,\omega_n\}$ where $\omega_p=\varepsilon_1+\dots+\varepsilon_p$.
Since any dominant weight is a non-negative integer combination of fundamental weights, then any dominant weight has the form $\sum_{j=1}^n a_j\varepsilon_j$ for some integers $a_1,\dots,a_n$ satisfying $a_1\geq a_2\geq \dots\geq a_n\geq0$. 
Since $G$ is simple, any $\op{Ad}(G)$-invariant inner product on $\mathfrak g$ is a negative multiple of the Killing form. 
We fix $\langle \cdot,\cdot\rangle_I$ as the negative multiple of the Killing form such that its bilinear extension to $\mathfrak h^*$, again denoted by $\langle \cdot,\cdot\rangle_I$, satisfies $\langle \varepsilon_i,\varepsilon_j\rangle_I=-\delta_{i,j}$.

By the Highest Weight Theorem, the irreducible representations of $G$ are in correspondence with the dominant weights.
Given $\mu$ a dominant weight, we denote by $\pi_\mu$ the irreducible representation of $G$ with highest weight $\mu$.

\begin{lemma}\label{lem:dimensiones}
We have that $\lambda^{\pi_{\varepsilon_1}}<\lambda^\pi$ for every irreducible representation $\pi$ of $\Sp(n)$ with $\pi\not\simeq 1_g,\pi_{\varepsilon_1}$.
Furthermore, for $n=2$, $n=3$, $n=4$ or $n\geq5$, any irreducible representation $\pi$ of $\Sp(n)$ satisfying $\dim V_\pi \leq (\dim V_{\pi_{\varepsilon_1}})^2$ is in Table~\ref{table:n=2}, \ref{table:n=3}, \ref{table:n=4} or Table~\ref{table:n>=5} respectively.
\end{lemma}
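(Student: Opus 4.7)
The plan is to treat the two assertions of the lemma separately.

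For the first, a direct computation from the list of positive roots of $\Sp(n)$ given just before the lemma yields $2\rho=\sum_{j=1}^{n}2(n-j+1)\varepsilon_j$, so formula \eqref{eq:eigenvalue} gives
\[
\lambda^{\pi_\mu} \;=\; \sum_{j=1}^{n} a_j\bigl(a_j+2(n-j+1)\bigr) \qquad \text{for } \mu=\textstyle\sum_{j=1}^n a_j\varepsilon_j.
\]
All summands are non-negative, and the total is exactly $2n+1$ when $\mu=\varepsilon_1$. For any other non-zero dominant $\mu$, I would split on the value of $a_1$: if $a_1\geq 2$ the first summand alone yields $a_1(a_1+2n)\geq 2(2n+2)>2n+1$, and if $a_1=1$ there must exist $k\geq 2$ with $a_k\geq 1$, adding a strictly positive term to the $2n+1$ coming from the leading coordinate.

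For the dimension count, I would apply Weyl's formula to $\Sp(n)$, which with $\mu=\sum a_j\varepsilon_j$ reads
\[
\dim V_{\pi_\mu}=\prod_{j=1}^{n}\frac{a_j+n-j+1}{n-j+1}\cdot\!\!\prod_{1\leq i<j\leq n}\!\!\frac{(a_i-a_j+j-i)(a_i+a_j+2n-i-j+2)}{(j-i)(2n-i-j+2)}.
\]
Since $\dim V_{\pi_{\varepsilon_1}}=2n$, the task is to enumerate the dominant $\mu$ with $\dim V_{\pi_\mu}\leq 4n^2$. For $n\in\{2,3,4\}$ the rank is low enough to carry out an exhaustive finite check; the enumeration can be pruned by noting that every factor of Weyl's formula involving $a_1$ carries a $+a_1$ in the numerator and is therefore strictly increasing in $a_1$, so once $\dim V_{\pi_{a_1\varepsilon_1}}$ exceeds $4n^2$ no weight with that leading coordinate can qualify. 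For $n\geq 5$, I would evaluate the small candidates in closed form: $\dim V_{\pi_{2\varepsilon_1}}=n(2n+1)$ (the adjoint), $\dim V_{\pi_{\omega_k}}=\binom{2n}{k}-\binom{2n}{k-2}$ for $\omega_k=\varepsilon_1+\cdots+\varepsilon_k$, $\dim V_{\pi_{3\varepsilon_1}}=\binom{2n+2}{3}$, $\dim V_{\pi_{2\varepsilon_1+\varepsilon_2}}=\tfrac{8n(n^2-1)}{3}$, and a handful more.

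The main obstacle is the exclusion step for $n\geq 5$: one must verify uniformly that every remaining dominant weight has dimension exceeding $4n^2$. The cleanest route is to isolate the sub-product of Weyl's formula coming from the roots involving $\varepsilon_1$ and $2\varepsilon_1$, which already dominates $4n^2$ as soon as (a) $a_1\geq 3$, or (b) $a_1=2$ and $a_j\geq 1$ for some $j\geq 3$, or (c) $a_1=1$ and $\mu$ has at least three non-zero coordinates. Verifying the implied polynomial inequalities in $n$ (for instance $\binom{2n+2}{3}>4n^2$ and $\binom{2n}{3}-2n>4n^2$ for $n\geq 5$) is elementary, but choosing a partition of cases that covers every non-listed weight without over-cutting is the delicate point, and is what determines the precise content of Table~\ref{table:n>=5}.
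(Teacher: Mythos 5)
Your treatment of the first assertion is fine and coincides with the paper's: \eqref{eq:eigenvalue} gives $\lambda^{\pi_\mu}=\sum_j a_j\bigl(a_j+2(n+1-j)\bigr)$, from which the strict minimality of $\lambda^{\pi_{\varepsilon_1}}=2n+1$ is immediate. The gap is in the dimension count, and it sits exactly where the paper has to do real work. Your pruning principle for $n\in\{2,3,4\}$ --- ``once $\dim V_{\pi_{a_1\varepsilon_1}}$ exceeds $4n^2$, no weight with that leading coordinate can qualify'' --- is the inequality $\dim V_{\pi_\mu}\geq \dim V_{\pi_{a_1\varepsilon_1}}$, i.e.\ precisely the paper's claim \eqref{eq:claim}, but the justification you offer (every Weyl factor containing $a_1$ is increasing in $a_1$) does not prove it: the comparison with $a_1\varepsilon_1$ is made by raising the \emph{tail} coordinates $a_2,\dots,a_n$ from $0$ to their values, and in those variables the dimension is not monotone, since the factors from the roots $\varepsilon_i-\varepsilon_j$ decrease (already in $\Sp(2)$ one has $\dim V_{\pi_{2\varepsilon_1+\varepsilon_2}}=16>14=\dim V_{\pi_{2\varepsilon_1+2\varepsilon_2}}$). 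The paper proves \eqref{eq:claim} by keeping, besides the roots $\varepsilon_1\pm\varepsilon_j$, \emph{all} the long roots $2\varepsilon_j$, and then establishing the three-factor inequality \eqref{eq:calculus} via a cubic-polynomial argument: it is the long-root factor $\tfrac{a_j+n+1-j}{n+1-j}$ that compensates the collapse of $\tfrac{a_1-a_j+j-1}{j-1}$ when $a_j$ is close to $a_1$.

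The same omission makes your ``cleanest route'' for $n\geq5$ fail quantitatively: the sub-product of Weyl factors over the roots involving $\varepsilon_1$ (including $2\varepsilon_1$) does \emph{not} dominate $4n^2$ in your cases (b) and (c). For $\mu=\omega_3=\varepsilon_1+\varepsilon_2+\varepsilon_3$ that sub-product equals $\tfrac{2n(2n+1)}{3(2n-1)}$, which is about $4.07$ at $n=5$ while $4n^2=100$; for $\mu=2\varepsilon_1+\varepsilon_2+\varepsilon_3$ at $n=5$ it equals $33$. The excluded weights do have large dimension, but not by this bound; moreover the margins are thin (at $n=5$, $\dim V_{\pi_{\omega_3}}=110$ against the threshold $100$), so no crude truncation of the Weyl product can work there --- the paper instead uses the exact formulas \eqref{eq:dimke_1omega_p} for $k\varepsilon_1$ and $\omega_p$, combined with the reduction \eqref{eq:claim} for general weights. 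Finally, your case partition leaves the weights with $a_1=2$, $a_2\geq1$ (e.g.\ $2\varepsilon_1+2\varepsilon_2$) to an unspecified ``handful more'' and you concede the partition itself is the delicate point; so, as written, the enumeration behind Tables~\ref{table:n=2}--\ref{table:n>=5} is asserted rather than proved. (To be fair, the paper also leaves the $a_1=2$, $a_2\geq1$ exclusion to the reader, but only after \eqref{eq:claim} and \eqref{eq:dimke_1omega_p} have disposed of all infinite families.)
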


\begin{table}
\begin{minipage}[t]{0.4\textwidth}
	\caption{Case $n=2$.}\label{table:n=2}
	\begin{center}
$
\begin{array}{ccc}
\mu_\pi &\lambda^\pi & \dim V_\pi \\ \hline 
		0&0&1\\
\varepsilon_1 & 5&4 \\
\varepsilon_1+\varepsilon_2 & 8&5\\
2\varepsilon_1 & 12&10 \\
2\varepsilon_1+\varepsilon_2 & 15&16\\
2\varepsilon_1+2\varepsilon_2 & 20&14
\end{array}
$
\end{center}
\end{minipage}
\quad
\begin{minipage}[t]{0.4\textwidth}
	\caption{Case $n=3$.}\label{table:n=3}
	\begin{center}
		$
		\begin{array}{ccc}
		\mu_\pi &\lambda^\pi & \dim V_\pi \\ \hline 
		0&0&1\\
		\varepsilon_1 & 7&6 \\
		\varepsilon_1+\varepsilon_2 & 12&14\\
		\varepsilon_1+\varepsilon_2+\varepsilon_3 & 15&14\\
		2\varepsilon_1 & 16&21
		\end{array}
		$
	\end{center}
\end{minipage}
\\[5mm]
\begin{minipage}[t]{0.4\textwidth}
\caption{Case $n=4$.}\label{table:n=4}
\begin{center}
$
\begin{array}{ccc}
\mu_\pi &\lambda^\pi & \dim V_\pi \\ \hline 
0&0&1\\
\varepsilon_1 & 9&8 \\
\varepsilon_1+\varepsilon_2 & 16&27\\
\varepsilon_1+\varepsilon_2 +\varepsilon_3 & 21&48\\
\varepsilon_1+\varepsilon_2 +\varepsilon_3+\varepsilon_4 & 24&42\\
2\varepsilon_1 & 20&36 
\end{array}
$
\end{center}
\end{minipage}
\quad
\begin{minipage}[t]{0.4\textwidth}
\caption{Case $n\geq5$.}\label{table:n>=5}
\begin{center}
$
\begin{array}{ccc}
\mu_\pi & \lambda^\pi & \dim V_\pi \\ \hline 
0&0&1\\
\varepsilon_1 &2n+1& 2n \\
\varepsilon_1+\varepsilon_2 &4n& (n-1)(2n+1)\\
2\varepsilon_1 &4(n+1)& n(2n+1)
\end{array} 
$
\end{center}
\end{minipage}
\end{table}

\begin{proof}
From \eqref{eq:eigenvalue}, for any dominant weight $\mu=\sum_{j=1}^n a_j\varepsilon_j$, we see that
\begin{equation}\label{eq:lambda^mu}
\lambda^{\pi_\mu} = \sum_{j=1}^m a_j(a_j+2(n+1-j)). 
\end{equation} 
It follows immediately that $\lambda^{\pi_{\varepsilon_1}} = 2n+1<\lambda^{\pi_\mu}$ for every $\mu\neq0,\varepsilon_1$, which is the first assertion. 
Furthermore, one can easily check that $\lambda^{\pi_{2\varepsilon_1}} = 4(n+1)$,  $\lambda^{\pi_{\omega_p}} = p(2n+2-p)$ for $1\leq p\leq n$, and the rest of the highest weights appearing in Table~\ref{table:n=2}.

The Weyl Dimension Formula $\dim V_{\pi_\mu } = \prod_{\alpha\in \Sigma^+} \frac{\langle\mu+\rho,\alpha\rangle}{\langle\rho,\alpha\rangle}$ for $\mathfrak g_\C=\spp(n,\C)$ becomes 
\begin{align}\label{eq:Weyldimformula}
\dim V_{\pi_\mu } 
&= \left(\prod_{i=1}^{n}\frac{\langle\mu+\rho, 2\varepsilon_i\rangle}{\langle \rho, 2\varepsilon_i\rangle}\right) \left(\prod_{i=1}^{n-1} \prod_{j=i+1}^{n} \frac{\langle\mu+\rho, \varepsilon_i+\varepsilon_j\rangle}{\langle\rho, \varepsilon_i+\varepsilon_j\rangle}
\frac{\langle\mu+\rho, \varepsilon_i-\varepsilon_j\rangle}{\langle\rho, \varepsilon_i-\varepsilon_j\rangle}
\right),  
\end{align}
where $\rho=\sum_{j=1}^n (n+1-j) \varepsilon_j$. 
Straightforward calculations show that 
\begin{align}\label{eq:dimke_1omega_p}
\dim V_{\pi_{k\varepsilon_1}} &= \binom{k+2n-1}{k},
&
\dim V_{\pi_{\omega_p}} &= \frac{2n+2-2p}{2n+2-p} \, \binom{2n+1}{p},
\end{align}
for every $k\geq0$ and $1\leq p\leq n$. 
In particular, $\dim V_{\pi_{k\varepsilon_1}} \geq \binom{2n+2}{3}>4n^2$ for every $k\geq3$ and $n\geq2$, and $\dim V_{\pi_{\omega_p}}>4n^2$ for every $p\geq3$ and $n\geq5$.
Furthermore, it is a simple matter to check that the dimensions shown in the tables are correct by using \eqref{eq:Weyldimformula}. 

Each dominant weight is uniquely a linear combination $\mu = \sum_{j=1}^n b_j\omega_j$ of the fundamental weights, where $b_j$ is a non-negative integer for all $j$. 
The \emph{width} of $\mu$, denoted by $\op{wd}(\mu)$, is defined to be $\sum_{j=1}^n b_j$. 
Now, \cite[Lemmas 2.1 and 2.2]{GoldsteinGuralnickStong17} implies that 
\begin{equation}
\dim V_{\pi_\mu} \geq \dim V_{\pi_{\op{wd}(\mu)\varepsilon_1}}. 
\end{equation}
It follows that $\dim V_{\pi_\mu}>4n^2 $ for all $\mu$ with $\op{wd}(\mu)\geq3$. 
We already checked in the previous paragraph whether $\dim V_{\omega_j} > 4n^2$.
It only remains to consider dominant weights with width equal to $2$, which is left to the reader. 
Note that the case $n=2$ is done since all dominant weights with width $2$ are already present in Table~\ref{table:n=2}. 
\end{proof}

%%% <  >

We are now in position to prove the main theorem for $n\geq2$, which follows immediately from the following stronger result. 
We recall that the case $n=1$ was already shown since $\Sp(1)\simeq \SU(2)\simeq S^3$.

\begin{theorem}\label{thm:stronger}
Let $g_0$ and $g$ be left-invariant metrics on $\Sp(n)$, $n\geq2$, such that $\op{vol}(G,g)=\op{vol}(G,g_0)$ and $g_0$ is also right-invariant.
If the least non-zero eigenvalue $\lambda_1(G,g_0)$ of $\Delta_{g_0}$ is in $\spec(G,g)$ with the same multiplicity as in $\spec(G,g_0)$, then $(G,g_0)$ and $(G,g)$ are isometric.  
\end{theorem}

\begin{proof}
By rescaling the metrics, since the volume is a spectral invariant, we can assume that $g_0=g_I$ and $g=g_A$ for some $A\in\SL(m,\R)$. 

We first show that it is sufficient to prove that 
\begin{equation}\label{eq:suff}
\tr (\pi(-C_A)) = \tr(\pi(-C_I))
\end{equation}
for some non-trivial representation $\pi\in \widehat G$. 
Indeed, if this is the case, then \eqref{eq:GSS2} gives $\|A\|^2=m$.
We claim that $A\in\SO(m)$, which implies that $\langle\cdot,\cdot\rangle_A = \langle\cdot, \cdot\rangle_I$, thus $(G,g_A)$ and $(G,g_I)$ are isometric as asserted. 
To show the claim, let $\sigma_1,\dots,\sigma_m$ denote the eigenvalues of the positive definite symmetric matrix $A^tA$.
The equality holds in the inequality of arithmetic and geometric means since $1= \det(A^t A)^{1/m} = ({\sigma_1\dots \sigma_m})^{1/m} \leq  (\sigma_1+\dots+\sigma_m)/m = \|A\|^2 /m =1$.
Consequently, $\sigma_1=\dots=\sigma_m=1$, $A^tA$ is similar to the identity matrix, so $A\in \SO(m)$. 
The rest of the proof consists in showing \eqref{eq:suff} for $\pi=\pi_{\varepsilon_1}$.

By \eqref{eq:spec_I} and Lemma~\ref{lem:dimensiones}, the smallest non-zero eigenvalue of $\Delta_I$ is $\lambda^{\pi_{\varepsilon_1}}=2n+1$ with multiplicity $4n^2$. 
Since $\spec(\Delta_A)=\spec(\Delta_I)$, there are $\pi_1,\dots,\pi_r\in\widehat G$ and integers $j_1,\dots,j_r$ satisfying  $1\leq j_h\leq d_{\pi_h}=\dim V_{\pi_h}$,  $\lambda_{j_h}^{\pi_h,A} = \lambda^{\pi_{\varepsilon_1}}$ for every $1\leq h\leq r$ and 
\begin{equation}\label{eq:integerequation}
4n^2 = \sum_{h=1}^r \dim V_{\pi_h}. 
\end{equation}
The second assertion in Lemma~\ref{lem:dimensiones} ensures that $\pi_1,\dots,\pi_h$ appear in Tables~\ref{table:n=2}--\ref{table:n>=5}.
Moreover, $\pi_h\not\simeq 1_G$ for all $h$ since $\lambda_1^{1_G,A}= 0$ because $\dim V_{1_G}=1$. 
The rest of the proof will be divided in the cases $n=2$, $n=3$, $n=4$ and $n\geq5$.
We recall that the case $n=1$ is included in Tanno's result mentioned in the introduction since $\Sp(1)\simeq \SU(2)$.

When $n\geq5$, Table~\ref{table:n>=5} implies that there are $a,b,c$ non-negative integer numbers such that $r=a+b+c$, $\pi_h\simeq \pi_{\varepsilon_1}$ for $a$ choices of $h$, $\pi_h\simeq \pi_{\varepsilon_1+\varepsilon_2}$ for $b$ choices of $h$, $\pi_h\simeq \pi_{2\varepsilon_1}$ for $c$ choices of $h$, and
\begin{align}
(2n+1)(2n-1)+1 =4n^2 & = a\dim V_{\pi_{\varepsilon_1}} + b\dim V_{\pi_{\varepsilon_1+\varepsilon_2}}+c\dim V_{\pi_{2\varepsilon_1}}\\
&= 2na+(n-1)(2n+1)b+n(2n+1)c. \notag
\end{align}
It follows easily that $2n+1$ divides $2na-1$, which implies that $a=2n$ and $b=c=0$. 
Consequently, $r=2n$, $\pi_h\simeq \pi_{\varepsilon_1}$ for all $h$, $\lambda_{j}^{\pi_{\varepsilon_1},A} = \lambda^{\pi_{\varepsilon_1}}$ for all $j$, so \eqref{eq:suff} holds for $\pi=\pi_{\varepsilon_1}$. 

When $n=3$, Table~\ref{table:n=3} gives that there are $a,b,c,d$ non-negative integer numbers satisfying  
\begin{equation}
36 =  d_{\pi_{\varepsilon_1}} a+  d_{\pi_{\varepsilon_1+\varepsilon_2}}b + d_{\pi_{\varepsilon_1+\varepsilon_2+\varepsilon_3}} c+  d_{\pi_{2\varepsilon_1}} d
=6a+14b+14c+21d,
\end{equation}
which immediately implies that $a=6$, $b=c=d=0$.
Similarly as above, we obtain \eqref{eq:suff}.

We now assume $n=4$. 
By Table~\ref{table:n=4}, there are $a,b,c,d,e$ non-negative integers such that 
\begin{equation}
64 = 8a+27b+48c+42d+36e.
\end{equation}
In this case, besides the trivial solution $a=8$, $b=c=d=e=0$ which immediately implies \eqref{eq:suff}, we only have the solution $a=2$, $c=1$, $b=d=e=0$. 
We assume this last possibility to obtain a contradiction.
We have that $r=3$, $\pi_1\simeq \pi_2\simeq \pi_{\varepsilon_1}$, $\pi_3\simeq \pi_{\varepsilon_1+\varepsilon_2+\varepsilon_3}$, $\lambda_{j}^{\pi_{\varepsilon_1},A} = \lambda^{\pi_{\varepsilon_1}}$ for exactly two choices of $j$, and $\lambda_{j}^{\pi_{\varepsilon_1+\varepsilon_2+\varepsilon_3},A} = \lambda^{\pi_{\varepsilon_1}}$ for exactly one choice of $j$.
We claim that the last condition is impossible. 
Indeed, the representation $\pi_{\varepsilon_1+\varepsilon_2+\varepsilon_3}$ is symplectic (see for instance \cite[Ch.~VI (5.3)]{BrockerDieck}), thus any eigenvalue of $\pi_{\varepsilon_1+\varepsilon_2+\varepsilon_3}(-C_A)$ has even multiplicity.
Consequently $\lambda_{j}^{\pi_{\varepsilon_1+\varepsilon_2+\varepsilon_3}, A} = \lambda^{\pi_{\varepsilon_1}}$ holds for an even number of choices of $j$.

We conclude the proof by considering the case $n=2$. 
Table~\ref{table:n=2} ensures that there are $a,b,c,d,e$ non-negative integers satisfying that 
\begin{equation}
16 = 4a+5b+10c+16d+14e.
\end{equation}
The trivial solution $a=4$, $b=c=d=e=0$ implies \eqref{eq:suff}. 
Let us check that the only other solution $d=1$, $a=b=c=e=0$ is impossible. 
The reason is the same as in the previous case.
Indeed, one can check that the representation $\pi_{2\varepsilon_1+\varepsilon_2}$ is symplectic, thus $\lambda_{j}^{\pi_{2\varepsilon_1+\varepsilon_2},A} = \lambda^{\pi_{\varepsilon_1}}$ holds necessary for an even number of choices of $j$. 
\end{proof}

\begin{remark}
One can check that, for any other compact simple Lie group $G$, equation \eqref{eq:integerequation} has at least two solutions. 
This explains why this proof does not work for any other case. 
\end{remark}

\section*{Acknowledgments}

The author would like to thank Dorothee Schueth for interesting discussions concerning this topic, 
to Jorge Lauret for extensive correspondence, and 
to the anonymous referees for giving him very helpful comments.
The author also wishes to thank the Alexander von Humboldt Foundation for financial support and the Humboldt Universit\"at zu Berlin for hospitality.

\bibliographystyle{plain}

\begin{thebibliography}{GGS17}

\bibitem[AYY13]{AnYuYu13}
    {\sc J. {An}, J.-K. {Yu}, J. {Yu}}.
    {\it On the dimension datum of a subgroup and its application to isospectral manifolds}.
    J. Differential Geom. \textbf{94}:1 (2013), 59--85.

\bibitem[BD]{BrockerDieck}
    {\sc T. Br\"ocker,  T. tom Dieck}.
    {Representations of compact {L}ie groups.}
    {\it Grad. Texts in Math.} \textbf{98}.
    Springer-Verlag, New York, 1995.
    DOI: \href{http://dx.doi.org/10.1007/978-3-662-12918-0} {10.1007/978-3-662-12918-0}.
    
\bibitem[CS]{ConwaySloane-book}
	{\sc J.H. Conway, N.J.A. Sloane}.
	Sphere packings, lattices and groups.
	Third edition. \textit{Grundlehren Math. Wiss.}, Springer-Verlag, New York, 1999.
	DOI: \href{http://dx.doi.org/10.1007/978-1-4757-6568-7} {10.1007/978-1-4757-6568-7}.

\bibitem[GGS17]{GoldsteinGuralnickStong17}
	{\sc D. Goldstein, R. Guralnick, R. Stong}.
	{\it A lower bound for the dimension of a highest weight module}.
	Represent. Theory \textbf{21} (2017), 611--625. 
	DOI: \href{http://dx.doi.org/10.1090/ert/509} {10.1090/ert/509}.

\bibitem[GSS10]{GordonSchuethSutton}
	{\sc C. Gordon, D. Schueth, C. Sutton}.
	{\it Spectral isolation of bi-invariant metrics on compact Lie groups}.
	Ann. Inst. Fourier (Grenoble) \textbf{60}:5 (2010), 1617--1628.
	DOI: \href{http://dx.doi.org/10.5802/aif.2567} {10.5802/aif.2567}.

\bibitem[GS10]{GordonSutton10}
	{\sc C. Gordon, C. Sutton}.
	{\it Spectral isolation of naturally reductive metrics on simple Lie groups}.
	Math. Z. \textbf{266}:4 (2010), 979--995.
	DOI: \href{http://dx.doi.org/10.1007/s00209-009-0640-6} {10.1007/s00209-009-0640-6}.

\bibitem[Kn]{Knapp-book-beyond}
	{\sc A.W. Knapp}.
	{Lie groups beyond an introduction.}
	{\it Progr. Math.} \textbf{140}.
	Birkh\"auser Boston Inc., 2002.
	
\bibitem[La18]{Lauret-SpecSU(2)}
	{\sc E.A. Lauret}.
	{\it The smallest Laplace eigenvalue of homogeneous 3-spheres}.
	\href{https://arxiv.org/abs/1801.04259}{arXiv:1801.04259} (2018).	
	
\bibitem[Pr05]{Proctor05}
	{\sc E. Proctor}.
	{\it Isospectral metrics and potentials on classical compact simple {L}ie groups}.
	Michigan Math. J. \textbf{53}:2 (2005), 305--318.
	DOI: \href{http://dx.doi.org/10.1307/mmj/1123090770} {10.1307/mmj/1123090770}.
	
\bibitem[SS14]{SchmidtSutton13}
	{\sc B. Schmidt, C. Sutton}.
	{\it Detecting the moments of inertia of a molecule via its rotational spectrum, II}.
	Preprint available at \href{http://users.math.msu.edu/users/schmidt/}{Schmidt's web page.} (2014).

\bibitem[Sch01]{Schueth01a}
	{\sc D. Schueth}.
	{\it Isospectral manifolds with different local geometries.}
	J. Reine Angew. Math. \textbf{534} (2001), 41--94.
	DOI: \href{http://dx.doi.org/10.1515/crll.2001.035} {10.1515/crll.2001.035}.

\bibitem[Su02]{Sutton02}
    {\sc C.J. Sutton}.
    {\it {Isospectral simply-connected homogeneous spaces and the spectral rigidity of group actions.}}
    Comment. Math. Helv. \textbf{77}:4 (2002), 701--717.
    DOI: \href{http://dx.doi.org/10.1007/PL00012438} {10.1007/PL00012438}.	
	
\bibitem[Ta73]{Tanno73}
	{\sc S. Tanno}.
	{\it Eigenvalues of the {L}aplacian of {R}iemannian manifolds.}
	T\v ohoku Math. J. (2) \textbf{25}:3 (1973), 391--403.
	DOI: \href{http://dx.doi.org/10.2748/tmj/1178241341} {10.2748/tmj/1178241341}.

\bibitem[Ur79]{Urakawa79}
	{\sc H. Urakawa}.
	{\it On the least positive eigenvalue of the {L}aplacian for compact group manifolds.}
	J. Math. Soc. Japan \textbf{31}:1 (1979), 209--226.
	DOI: \href{http://dx.doi.org/10.2969/jmsj/03110209} {10.2969/jmsj/03110209}.
	
	
\end{thebibliography}

\end{document}